\theoremstyle{definition}
\newtheorem{definition}{Definition}[section]
\newtheorem{theorem}{Theorem}[section]
\newtheorem{corollary}{Corollary}[theorem]
\newtheorem{lemma}[theorem]{Lemma}
\theoremstyle{remark}
\newtheorem*{remark}{Remark}
\title{\LARGE \bf
	Negative Imaginary State Feedback Control with a Prescribed Degree of Stability
}
\author{James Dannatt$^{1}$ and Ian Petersen$^{2}$
\thanks{*This work was supported by the Australian Research Council under grant DPI60101121.}
\thanks{$^{1}$James Dannatt is with the Research School of Engineering at the Australian National University, ACT.
        {\tt\small james.dannatt@anu.edu.au}}%
\thanks{$^{2}$Ian Petersen is with the Research School of Engineering at the Australian National University, ACT.
        {\tt\small i.r.petersen@gmail.com}}%
}
\begin{document}

\maketitle
\thispagestyle{empty}
\pagestyle{empty}

\begin{abstract}
This paper presents a method for the synthesis of negative imaginary closed-loop systems with a prescribed degree of stability under the assumption of full state feedback. The approach extends existing work by using a perturbation method to ensure a closed-loop system that has both the negative imaginary property and a prescribed degree of stability. This approach involves the real Schur decomposition of a matrix followed by the solution to two Lyapunov equations which provides computational advantages over alternate state feedback synthesis techniques. Also, some counterexamples are presented which clarify the perturbation properties of strictly negative imaginary systems. Finally, an illustrative example demonstrates the approach.
\end{abstract}

\section{Introduction}

Negative imaginary (NI) systems theory is concerned with stable systems that have a phase response in the interval $[-\pi, 0]$ for positive frequencies~\cite{Lanzon2008}. This corresponds to a positive real (PR) frequency response rotated clockwise by $90\degree$ in the Nyquist plane. However, NI systems theory is more than a simple rotation of PR theory. Whereas PR systems are constrained to have a relative degree of either zero, one or minus one, NI systems theory supports systems with a relative degree of zero, one and two \cite{Petersen2010}. This has resulted in NI theory emerging as a flourishing complement to positive real (PR) and passivity theory~\cite{Lanzon2008,Petersen2010}.
\\\\
NI systems theory was originally motivated by the study of linear mechanical systems with collocated force inputs and position outputs~\cite{Lanzon2008}. However, NI systems theory can be applied in many other domains, such as RLC circuits~\cite{Petersen2015}, nano-positioning systems~\cite{Mabrok2014} and formation control of multiple UAVs~\cite{8287487}. Robust stability conditions for NI systems have been developed and are well understood \cite{Lanzon2008,5483104}. In particular, it has been shown that the positive feedback interconnection of an NI system with a strictly negative imaginary (SNI) system is internally stable as long as conditions on the closed-loop DC gain are satisfied \cite{Lanzon2008}. It is this understanding of NI robust stability conditions that has motivated controller synthesis results with the aim of creating a closed-loop system with the NI or SNI property. This closed-loop NI property would for example guarantee robust stability of the closed-loop to the un-modeled dynamics of a flexible structure~\cite{7074636}.
\\\\
Early work on controller synthesis within the NI framework was presented in \cite{Petersen2010} and \cite{5430931} with a focus on state feedback and linear matrix inequality (LMI) based synthesis techniques. Drawing on the $H_{\infty}$ literature, \cite{Mabrok2012} proposed a synthesis approach using the solution to an algebraic Riccati equation (ARE) that could be obtained by solving two Lyapunov equations. This approach was computationally efficient and scaled well with high order systems but left a closed-loop pole at the origin ensuring a marginally stable closed-loop system.
\\\\
Realizing this shortcoming, the papers \cite{Mabrok2012a,Mabrok2015} modified the approach of \cite{Mabrok2012} using a perturbation applied to the plant matrix of the open-loop system in order to ensure asymptotic stability of the closed-loop system. The perturbation achieved closed-loop asymptotic stability but the closed-loop system could no longer be guaranteed to be NI as no proof was offered to support the preservation of the NI property.
\\\\
Since the publishing of \cite{Mabrok2012a} and \cite{Mabrok2015,FERRANTE20132138} translated the rational positive real property of \cite{anderson2013network} to show that a symmetric negative imaginary transfer function is negative imaginary over an entire orthant of interest.
\\\\
This paper extends the work of \cite{Mabrok2012a} and \cite{Mabrok2015} by showing that a system preserves the NI property after a negative perturbation of the plant matrix if and only if it is NI over a specific orthant in the complex plane. We show that the orthant of interest formed from plant matrix perturbation is exactly the orthant described in \cite{FERRANTE20132138} in the single input single output (SISO) case. These results are then brought together to give a new NI synthesis method that guarantees both stability and the NI property of the closed-loop system. Furthermore, the proposed approach guarantees a prescribed degree of stability in addition to asymptotic stability e.g. see \cite{5249185}. This is useful in designing a control system to achieve not only robustness but also an adequate level of performance. Our approach offers the same computational advantages as the technique given in \cite{Mabrok2012} as it requires only the Schur decomposition of a matrix and the solution of two Lyapunov equations.

\section{Definitions}

The notation $Im[G(j\omega)]$ refers to the imaginary component of the frequency response $G(j\omega)$. Analogously $Re[G(j\omega)]$ refers to the real component of $G(j\omega)$. $C^*$ refers to the complex conjugate transpose of a matrix or vector $C$.

\section{Preliminaries}

Consider the linear time-invariant (LTI) system,
\begin{IEEEeqnarray}{c} \label{system}
	\dot{x}(t) = Ax(t) + Bu(t), \nonumber \\
	y(t) = Cx(t) + Du(t)
\end{IEEEeqnarray}
where $A \in \mathbb{R}^{n \times n},$ $B \in \mathbb{R}^{n \times m},$ $C \in \mathbb{R}^{m \times n}$ and $D \in \mathbb{R}^{m \times m}$.

The following two definitions relate to the NI and SNI properties of the transfer function matrix $G(s) = C(sI - A)^{-1}B+D$ corresponding to the system (\ref{system}).

\begin{remark}
The following definitions hold for general MIMO system. However, the main result of this paper applies strictly to SISO systems where $m=1$.
\end{remark}

\theoremstyle{definition}
\begin{definition} \label{def:NI}{ A square transfer function matrix $G(s)$ is NI if the following conditions are satisfied \cite{8287451}:}
	\begin{enumerate}
		\item G(s) has no pole in $Re[s]>0$.

		\item For all $\omega \geq 0$ such that $jw$ is not a pole of $G(s)$, $j(G(j\omega) - G(j\omega)^*) \geq 0$.

		\item If $s=j\omega_0$, $ \omega_0 > 0$ is a pole of $G(s)$ then it is a simple pole. Furthermore, if $s=j\omega_0$, $ \omega_0 > 0$ is a pole of $G(s)$, then the residual matrix $K = \lim_{s \to j\omega_0} (s-j\omega_0)jG(s)$ is positive semidefinite Hermitian.

		\item If $s=0$ is a pole of $G(s)$, then it is either a simple pole or a double pole. If it is a double pole, then, $\lim_{s \to 0} s^2G(s) \geq 0$.
	\end{enumerate}
\end{definition}

Also, an LTI system (\ref{system}) is said to be NI if the corresponding transfer function matrix $G(s) = C(sI-A)^{-1}B + D$ is NI.

\begin{definition} \label{def: SNI freq domain definition}
{A square transfer function matrix $G(s)$ is SNI if the following conditions are satisfied \cite{8287451}:}
	\begin{enumerate}
		\item G(s) has no poles in $Re[s] \geq 0$.

		\item For all $\omega > 0$ such that $jw$ is not a pole of $G(s)$, $j(G(j\omega) - G(j\omega)^*) > 0$.
	\end{enumerate}
\end{definition}

Also, an LTI system (\ref{system}) is said to be SNI if the corresponding transfer function matrix $G(s) = C(sI-A)^{-1}B + D$ is SNI.
\\\\
The above NI and SNI definitions provide a means of testing if a system is NI or SNI by analyzing the system properties in the frequency domain. Both definitions will be used in the proofs to follow.
\\\\
An alternate method of characterizing NI or SNI systems that is directly used in controller synthesis is provided by the following Riccati equation based lemmas.

\begin{lemma}\label{theorem:NI using riccati} (\cite{Mabrok2015})
Let $
\begin{bmatrix}
    \begin{tabular}{ l | r }
  A & B \\ \hline
  C & D
\end{tabular}
\end{bmatrix}
$ be a minimal realization of $G(s)$ and suppose $CB + B^TC^T > 0$. Then $G(s)$ is NI if and only if $D = D^T$ and there is exists a matrix $P \geq 0$ which solves the following algebraic Riccati equation
  \begin{IEEEeqnarray}{c} \label{math:riccati}
  PA_0 + {A_0}^TP + PBR^{-1}B^TP + Q = 0,
  \end{IEEEeqnarray}
where
\begin{IEEEeqnarray*}{c}
  A_0 = A-BR^{-1}CA, \\
    R = CB + B^TC^T, \\
    Q = A^TC^TR^{-1}CA
  \end{IEEEeqnarray*}
  and $A_0 \in \mathbb{R}^{n \times n}$, $R \in \mathbb{R}^{m \times m}$ and $Q \in \mathbb{R}^{n \times n}$.
\end{lemma}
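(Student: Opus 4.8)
The plan is to reduce the negative-imaginary characterization of $G(s)$ to a positive-real characterization of an auxiliary transfer function, and then invoke the classical positive real (KYP) lemma. I would define $W(s) := s\bigl(G(s) - D\bigr)$. Writing $sC(sI-A)^{-1}B = C(sI-A+A)(sI-A)^{-1}B = CB + CA(sI-A)^{-1}B$ gives $W(s) = CA(sI-A)^{-1}B + CB$, so $W$ admits the realization $(A,\,B,\,CA,\,CB)$. Its feedthrough term is $CB$, whose symmetric part is $CB + B^TC^T = R > 0$ by hypothesis; this is precisely the nonsingularity condition under which the positive real lemma collapses from an LMI to an algebraic Riccati equation.

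First I would establish the frequency-domain equivalence. For real $\omega$ and $D = D^T$, a direct computation gives $W(j\omega) + W(j\omega)^* = j\omega\bigl(G(j\omega) - G(j\omega)^*\bigr) = \omega\cdot j\bigl(G(j\omega) - G(j\omega)^*\bigr)$, so the NI inequality $j(G(j\omega) - G(j\omega)^*) \ge 0$ for $\omega \ge 0$ is equivalent to $W(j\omega) + W(j\omega)^* \ge 0$, i.e.\ to positive realness of $W$ on the imaginary axis. The symmetry $D = D^T$ follows in the forward direction from the large-frequency limit of the NI inequality (and is trivial in the SISO case $m=1$ of interest), and is assumed in the converse. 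I would also verify that the pole structures correspond: $W$ and $G$ share the same finite poles away from the origin, the simple-pole and positive-semidefinite-residue requirements of Definition~\ref{def:NI} translate into the pole conditions of a positive real function, and multiplication by $s$ neutralizes a possible pole of $G$ at the origin.

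With the correspondence in hand, I would apply the positive real lemma to the realization $(A,B,CA,CB)$ of $W$. Since $R = CB + B^TC^T > 0$, the lemma gives that $W$ is positive real if and only if there exists $P \ge 0$ solving $A^TP + PA + (PB - A^TC^T)R^{-1}(B^TP - CA) = 0$. Expanding the quadratic term and regrouping with $A_0 = A - BR^{-1}CA$ and $Q = A^TC^TR^{-1}CA$ (using $R = R^T$, so that $(BR^{-1}CA)^T = A^TC^TR^{-1}B^T$) shows this equation is term-by-term identical to the stated Riccati equation $PA_0 + A_0^TP + PBR^{-1}B^TP + Q = 0$. This algebraic identity closes both directions at once: $G$ is NI $\Leftrightarrow$ $W$ is positive real $\Leftrightarrow$ there exists $P \ge 0$ solving the displayed ARE.

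The main obstacle is not the algebra, which is routine, but verifying that the realization $(A,B,CA,CB)$ of $W$ inherits the minimality required to apply the positive real lemma, and that the pole-residue conditions transfer cleanly. Controllability of $(A,B)$ is immediate because $B$ is unchanged, but observability of $(CA,A)$ does not follow from observability of $(C,A)$ when $A$ is singular, which is exactly the case of an NI system with a pole at the origin. Managing this case, together with the careful bookkeeping of the imaginary-axis residue conditions of Definition~\ref{def:NI} under multiplication by $s$, is where the argument demands the most care; one route is to treat the origin pole separately via item~4 of Definition~\ref{def:NI} and restrict the Riccati and minimality argument to the remaining minimal part.
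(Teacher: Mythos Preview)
The paper does not actually prove this lemma: its entire proof consists of the sentence ``A proof of this lemma is given in \cite{Mabrok2015}.'' Your approach---defining the auxiliary transfer function $W(s)=s(G(s)-D)$, showing that the NI condition on $G$ is equivalent to positive realness of $W$, and then applying the positive real lemma to the realization $(A,B,CA,CB)$ to obtain the Riccati equation---is precisely the route taken in the cited reference and in the surrounding literature, so in substance you have reproduced the intended argument. Your algebraic reduction of the KYP Riccati $A^TP+PA+(PB-A^TC^T)R^{-1}(B^TP-CA)=0$ to the stated form $PA_0+A_0^TP+PBR^{-1}B^TP+Q=0$ is correct.

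You have also correctly isolated the only genuinely delicate point: minimality of $(A,B,CA,CB)$ fails exactly when $A$ is singular, since any nonzero $x\in\ker A$ is unobservable for $(CA,A)$, and this is precisely the case of an NI system with a pole at the origin (item~4 of Definition~\ref{def:NI}). The cited proof handles this either by assuming $\det A\neq 0$ (as earlier NI lemma statements do) or by the split you suggest---peeling off the kernel of $A$ and applying the PR lemma on the remaining minimal subsystem. Your proposal is therefore sound and faithful to the original; the only thing to add to make it complete is to carry out that origin-pole bookkeeping explicitly rather than leaving it as a remark.
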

\begin{proof}
A proof of this lemma is given in \cite{Mabrok2015}.
\end{proof}

The corresponding lemma for the characterization of SNI transfer functions is as follows.

\begin{lemma} \label{lemma: SNI Riccati conditions}
Let $
\begin{bmatrix}
    \begin{tabular}{ l | r }
  A & B \\ \hline
  C & D
\end{tabular}
\end{bmatrix}
$ be a minimal realization of $G(s)$ and suppose $CB + B^TC^T > 0$. Then $G(s)$ is SNI if and only if
\begin{enumerate}
\item $A$ has no imaginary-axis eigenvalues and $D = D^T$

\item There is exists a matrix $P=P^T>0$ which solves the following algebraic Riccati equation and $P$ is a stabilizing solution.
  \begin{IEEEeqnarray}{c} \label{math:riccati SNI}
  PA_0 + {A_0}^TP + PBR^{-1}B^TP + Q = 0,
  \end{IEEEeqnarray}
where
\begin{IEEEeqnarray*}{c}
    A_0 = A-BR^{-1}CA, \\
    R = CB + B^TC^T, \\
    Q = A^TC^TR^{-1}CA
  \end{IEEEeqnarray*}
  and $A_0 \in \mathbb{R}^{n \times n}$, $R \in \mathbb{R}^{m \times m}$ and $Q \in \mathbb{R}^{n \times n}$.

 \item All the eigenvalues of the matrix $A_0 + BR^{-1}B^TP$ lie in the open left half of the complex plane or at the origin.
\end{enumerate}
\end{lemma}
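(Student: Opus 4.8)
The plan is to establish both directions of the equivalence by reusing the NI Riccati characterization (Lemma~\ref{theorem:NI using riccati}) for the ``NI part'' and then handling separately the extra strictness that distinguishes SNI from NI. The guiding observation is that, comparing Definition~\ref{def: SNI freq domain definition} with Definition~\ref{def:NI}, the SNI property is precisely the NI property reinforced by (i) excluding all poles on the imaginary axis and (ii) a strict frequency-domain inequality for $\omega>0$; conditions~1--3 of the lemma are the Riccati-level encodings of these reinforcements.

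For necessity (SNI $\Rightarrow$ 1,2,3), I would first argue that SNI implies NI, since forbidding poles throughout $Re[s]\ge 0$ makes the imaginary-axis-pole conditions of Definition~\ref{def:NI} vacuous and the strict inequality implies the non-strict one. By minimality the poles of $G$ coincide with the eigenvalues of $A$, so the absence of closed-right-half-plane poles yields that $A$ has no imaginary-axis eigenvalues (indeed $A$ is Hurwitz), and $D=D^T$ is inherited from the NI conclusion; this gives condition~1. Applying Lemma~\ref{theorem:NI using riccati} then furnishes a solution $P\ge 0$ of~\eqref{math:riccati SNI}. The remaining work is to upgrade this to a positive-definite stabilizing solution: I would use minimality (observability) together with the strict inequality to exclude a nontrivial kernel of $P$ --- a nonzero vector in $\ker P$ would force an unobservable mode or contradict strictness --- giving $P=P^T>0$, and then use the Hamiltonian-matrix analysis of the ARE, where the strict inequality keeps the Hamiltonian from having unwanted imaginary-axis eigenvalues, to identify $P$ as the stabilizing solution and to place the spectrum of $A_0+BR^{-1}B^TP$ in the closed left half plane. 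This establishes conditions~2 and~3.

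For sufficiency (1,2,3 $\Rightarrow$ SNI), I would run the argument in reverse. Since $P>0$ in particular solves~\eqref{math:riccati SNI} with $P\ge 0$, Lemma~\ref{theorem:NI using riccati} immediately gives that $G$ is NI. It then remains to strengthen NI to SNI. For the pole condition, I would combine the NI conclusion (no poles in $Re[s]>0$) with condition~1 (no imaginary-axis eigenvalues of $A$), which by minimality forces $A$ to be Hurwitz, so that $G$ has no poles in $Re[s]\ge 0$. For the strict frequency-domain inequality, I would use the Riccati identity as a storage-function certificate for $j(G(j\omega)-G(j\omega)^*)$ and show that $P>0$ together with the closed-left-half-plane spectrum condition~3 promotes the inequality from non-strict to strict for every $\omega>0$. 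Together these verify the two clauses of Definition~\ref{def: SNI freq domain definition}.

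The step I expect to be the main obstacle is the transition from semidefinite to strict conditions, which is concentrated in two delicate points. The first is proving $P=P^T>0$ rather than merely $P\ge 0$, which requires combining minimality with the strict inequality rather than relying on the Riccati equation alone. The second, and more subtle, is the precise boundary behavior demanded by condition~3: the matrix $A_0+BR^{-1}B^TP$ is permitted an eigenvalue \emph{at the origin} but no other imaginary-axis eigenvalue. I expect this origin eigenvalue to be an intrinsic feature of the NI setting --- reflecting the relative-degree-two structure that differentiates NI from positive-real theory --- so the argument must show that it is consistent with strictness at nonzero frequencies while never reintroducing a pole of $G$ in the closed right half plane. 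Getting this distinction exactly right, rather than over-constraining $A_0+BR^{-1}B^TP$ to be Hurwitz, is where the proof must be most careful.
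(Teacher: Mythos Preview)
Your plan is correct and aligns with the paper's approach: both directions pivot on the NI Riccati characterization (Lemma~\ref{theorem:NI using riccati}), with the extra strictness handled separately. The paper is far terser than your sketch---it delegates the entire necessity direction to \cite{Mabrok2012a}, and for sufficiency it extracts from the Riccati the single Lyapunov inequality $PA+A^TP\le 0$ (so $A$ is Lyapunov stable), combines this with condition~1 to force $A$ Hurwitz, and then concludes SNI by appeal to the cited reference rather than by the direct storage-function verification of the strict frequency-domain inequality that you propose.
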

\begin{proof}
This result is an extension of Theorem 3 of \cite{Mabrok2012a} and the proof follows closely the proof given in \cite{Mabrok2012a}.
\\\\
Suppose that $G(s)$ is SNI. Then conditions 1), 2) and 3) follow as in the proof given in \cite{Mabrok2012a}. Conversely, suppose that conditions 1), 2) and 3) are satisfied. It follows from Condition 2) that a $P>0$ exists such that $G(s)$ is NI and
\begin{align*}
  PA + A^TP \leq 0,
\end{align*}
is satisfied. Therefore $A$ is Lyapunov stable and it follows from Condition 1) that $G(s)$ is actually SNI.
\end{proof}

\begin{lemma} \label{lemma: NI Ricc stabilizing always singular
} Suppose A is non-singular and $R = CB + B^TC^T > 0.$ If there is exists a matrix $P=P^T>0$ which solves (\ref{math:riccati SNI}) and the eigenvalues of the matrix $A_0 + BR^{-1}B^TP$ are in the closed left half of the complex plane, then $A_0 + BR^{-1}B^TP$ will always be singular.
\end{lemma}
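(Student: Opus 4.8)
The plan is to prove singularity by showing directly that $\det(A_0 + BR^{-1}B^TP) = 0$. The first step is to simplify the closed-loop matrix: since $A_0 = A - BR^{-1}CA$, we have
\[ A_0 + BR^{-1}B^TP = A + BR^{-1}(B^TP - CA) = A + BR^{-1}F, \]
where $F := B^TP - CA$. Because $A$ is nonsingular, the matrix determinant lemma (Sylvester's identity) gives $\det(A + BR^{-1}F) = \det(A)\det(R^{-1})\det\!\big(R + FA^{-1}B\big)$, so it suffices to prove that the $m\times m$ matrix $N := R + FA^{-1}B$ is singular; in fact I would aim to show $N = 0$.

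The second step is to rewrite the Riccati equation in a completed-square form. Substituting $A_0$ and $Q = A^TC^TR^{-1}CA$ into (\ref{math:riccati SNI}) and regrouping, the four cross terms assemble exactly into $F^TR^{-1}F$, yielding the equivalent identity
\[ PA + A^TP + F^TR^{-1}F = 0. \]
I would then sandwich this identity between $B^TA^{-T}$ on the left and $A^{-1}B$ on the right. Writing $X := B^TPA^{-1}B$ and using $P = P^T$ so that $B^TA^{-T}PB = X^T$, the left-hand side collapses to $X + X^T$, while the right-hand side becomes $-(FA^{-1}B)^TR^{-1}(FA^{-1}B)$ with $FA^{-1}B = X - CB$. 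Hence
\[ X + X^T = -(X - CB)^TR^{-1}(X - CB). \]

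The final step is the algebraic collapse. Noting that $N = R + FA^{-1}B = X + B^TC^T$ (using $R = CB + B^TC^T$ and $FA^{-1}B = X - CB$), I would substitute $X = N - B^TC^T$ into the previous display; every term cancels except a single quadratic, leaving $N^TR^{-1}N = 0$. Since $R > 0$ implies $R^{-1} > 0$, this forces $N = 0$, and therefore $\det(A_0 + BR^{-1}B^TP) = \det(A)\det(R^{-1})\det(N) = 0$, so the closed-loop matrix is singular.

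The main obstacle is one of recognition rather than technical difficulty: spotting that the Riccati cross-terms complete the square into $F^TR^{-1}F$, and then choosing to pre- and post-multiply by precisely $B^TA^{-T}$ and $A^{-1}B$ so that the quadratic identity reduces to a condition on the single matrix $N$. I would carefully re-check the cancellation that produces $N^TR^{-1}N = 0$, since a sign slip there is the most likely point of failure. It is worth noting that this argument uses only that $A$ is nonsingular, $R > 0$, and $P = P^T$ solves (\ref{math:riccati SNI}); the hypothesis that the eigenvalues of $A_0 + BR^{-1}B^TP$ lie in the closed left half-plane does not appear necessary to force singularity, so I would verify whether it is genuinely required or is stated only to place the lemma within the stabilizing-solution setting.
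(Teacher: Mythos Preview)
Your proof is correct and takes a genuinely different route from the paper's. The paper argues via the Hamiltonian matrix
\[
H=\begin{bmatrix} A_0 & BR^{-1}B^T\\ -Q & -A_0^T\end{bmatrix},
\]
constructing an explicit nonzero null vector of $H$ of the form $\bigl[\,A^{-1}B\beta;\,-C^T\beta\,\bigr]$, and then (implicitly, through the standard block-triangularization $\begin{bmatrix}I&0\\-P&I\end{bmatrix}H\begin{bmatrix}I&0\\P&I\end{bmatrix}$ induced by any Riccati solution $P$) concluding that $A_0+BR^{-1}B^TP$ is singular. Your argument bypasses the Hamiltonian entirely: the matrix determinant lemma reduces the question to the $m\times m$ object $N=R+FA^{-1}B$, and the completed-square form $PA+A^TP+F^TR^{-1}F=0$, sandwiched by $B^TA^{-T}(\cdot)A^{-1}B$, forces $N^TR^{-1}N=0$ and hence $N=0$. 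This is more self-contained (no appeal to Hamiltonian invariant-subspace theory) and in fact yields a slightly stronger conclusion: from $N=0$ one gets $R^{-1}FA^{-1}B=-I$, so $(A_0+BR^{-1}B^TP)A^{-1}B=0$, exhibiting an $m$-dimensional null space rather than mere singularity. Your closing remark is also correct: neither your proof nor the paper's actually uses the closed-left-half-plane eigenvalue hypothesis (or even $P>0$); only the nonsingularity of $A$, $R>0$, and that $P=P^T$ solves the Riccati equation are needed, so the hypothesis is there only to frame the lemma within the stabilizing-solution context of Lemma~\ref{lemma: SNI Riccati conditions}.
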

\begin{proof}
The Hamiltonian matrix associated with (\ref{math:riccati SNI}) is given by,
\begin{align*}
  H = \begin{bmatrix}
        \begin{tabular}{ l r }
        $A-BR^{-1}CA$     & $BR^{-1}B^T$  \\
        $-A^TC^TR^{-1}CA$   & $-A^T+A^TC^TR^{-1}B^T$
      \end{tabular}
    \end{bmatrix}.
\end{align*}

Hence, the matrix $A_0 + BR^{-1}B^TP$ will be singular if there exists a vector $\begin{bmatrix}
      \begin{tabular}{ l }
        x \\
        y
    \end{tabular}
  \end{bmatrix}$ such that
\begin{align*}
  \begin{bmatrix}
      \begin{tabular}{ l r }
        $A-BR^{-1}CA$     & $BR^{-1}B^T$  \\
        $-A^TC^TR^{-1}CA$   & $-A^T+A^TC^TR^{-1}B^T$
    \end{tabular}
  \end{bmatrix}
  \begin{bmatrix}
      \begin{tabular}{ l }
        x \\
        y
    \end{tabular}
  \end{bmatrix} = 0.
\end{align*}
This is equivalent to
\begin{align} \label{eq_1}
  (I-BR^{-1}C)Ax  + BR^{-1}B^Ty = 0,
\end{align}
\begin{align}  \label{eq_2}
-A^TC^TR^{-1}CAx - A^T(I+C^TR^{-1}B^T)y = 0.
\end{align}
Now let $z = Ax$. Equations (\ref{eq_1}) and (\ref{eq_2}) are equivalent to,
\begin{align}
  z-BR^{-1}Cz  + BR^{-1}B^Ty &= 0,  \\
C^TR^{-1}Cz + y -C^TR^{-1}B^Ty &= 0.
\end{align}
In matrix form this is equivalent to
\begin{align*}
  \begin{bmatrix}
      \begin{tabular}{ l r }
        $I-BR^{-1}C$    & $BR^{-1}B^T$  \\
        $C^TR^{-1}C$  & $I-C^TR^{-1}B^T$
    \end{tabular}
  \end{bmatrix}
  \begin{bmatrix}
      \begin{tabular}{ l }
        z \\
        y
    \end{tabular}
  \end{bmatrix} = 0,
\end{align*}
\begin{align*}
   \iff I - \begin{bmatrix}
      \begin{tabular}{ l r }
        $BR^{-1}C$    & $-BR^{-1}B^T$   \\
        $-C^TR^{-1}C$   & $C^TR^{-1}B^T$
    \end{tabular}
  \end{bmatrix}
  \begin{bmatrix}
      \begin{tabular}{ l }
        z \\
        y
    \end{tabular}
  \end{bmatrix} = 0,
\end{align*}
\begin{align} \label{math: expanded equation 1}
  \iff  I - \begin{bmatrix}
      \begin{tabular}{ l  }
        $B$     \\
        $-C^T$
    \end{tabular}
  \end{bmatrix} R^{-1}
  \begin{bmatrix}
      \begin{tabular}{l r}
        C  & $-B^T$
    \end{tabular}
  \end{bmatrix}
  \begin{bmatrix}
      \begin{tabular}{ l }
        z \\
        y
    \end{tabular}
  \end{bmatrix} = 0.
\end{align}
Here,
\begin{align*}
  R = CB + B^TC^T &= \begin{bmatrix}
      \begin{tabular}{ l  r}
        $C$ &  $-B^T$
    \end{tabular}
  \end{bmatrix}
  \begin{bmatrix}
      \begin{tabular}{l r}
        B \\
         $-C^T$
    \end{tabular}
  \end{bmatrix}, \\
  &= V_2^TV_1 > 0,
\end{align*}
where
\begin{align*}
  V_1 &=  \begin{bmatrix}
      \begin{tabular}{ l  r}
        $B$ \\
        $-C^T$
    \end{tabular}
  \end{bmatrix}, &
  V_2 &=
  \begin{bmatrix}
      \begin{tabular}{l r}
        $C^T$ \\
         $-B$
    \end{tabular}
  \end{bmatrix} .
\end{align*}
Hence, (\ref{math: expanded equation 1}) is equivalent to
\begin{align} \label{math: v equation 1}
  (I - V_1(V_2^TV_1)^{-1}V_2^T) \begin{bmatrix}
      \begin{tabular}{ l }
        z \\
        y
    \end{tabular}
  \end{bmatrix}.
\end{align}
Thus, we wish to construct a non zero vector $w := \begin{bmatrix}
      \begin{tabular}{ l }
        z \\
        y
    \end{tabular}
  \end{bmatrix}$ such that
\begin{align}
  (I - V_1(V_2^TV_1)^{-1}V_2^T)w = 0.
\end{align}
We now construct $w$ of the form $w = V_1 \beta$ and choose a $\beta$ such that $w=V_1 \beta \neq 0$. Then
\begin{align*}
& V_1 (\beta - (V_2^TV_1)^{-1}V_2^TV_1 \beta) = 0, \\
\implies& V_1 \beta - V_1(V_2^TV_1)^{-1}V_2^TV_1 \beta = 0, \\
\implies& w - V_1(V_2^TV_1)^{-1}V_2^Tw = 0, \\
\implies& (I - V_1(V_2^TV_1)^{-1}V_2^T)w = 0.
\end{align*}
This implies that $A_0 + BR^{-1}B^TP$ is singular.
\end{proof}

The Riccati equation SNI lemma implies the existence of poles at the origin in the stabilizing solution of (\ref{math:riccati SNI}). It follows that the corresponding Hamiltonian matrix is singular and as both Lemma~\ref{theorem:NI using riccati} and Lemma~\ref{lemma: SNI Riccati conditions} share the same Riccati equation, this is actually true of both Riccati equation NI lemmas.
\\\\
The significance of this result comes from the fact that AREs with singular Hamiltonians can be computationally difficult to solve~\cite{bini2012numerical}. For small order systems, this doesn't cause an immediate problem. However for large order systems, this can impact results that depend on solving the AREs (\ref{math:riccati}) and (\ref{math:riccati SNI}). The NI controller synthesis method presented in \cite{Mabrok2012} is one such result that suffers from this problem as it uses the ARE (\ref{math:riccati}) to construct a controller.
\\\\
We will now present the NI controller synthesis lemma offered in \cite{Mabrok2012} and highlight the techniques used to address the computational difficulties associated with the singular Hamiltonian of (\ref{math:riccati}).
\\\\
First, consider the following state space representation of a linear uncertain system with SNI uncertainty:
\begin{align} \label{uncertain system}
  \dot{x} &= Ax + B_1w + B_2u; \\
  z       &= C_1x;             \\
  w       &= \Delta(s)z. \label{uncertain systemc}
\end{align}
where $A \in \mathbb{R}^{n \times n}$, $B_1 \in \mathbb{R}^{n \times 1}$, $B_2 \in \mathbb{R}^{n \times r}$, $C_1 \in \mathbb{R}^{1 \times n}$, $\Delta(s)$ represents an uncertainty transfer function matrix which is assumed to be SNI, $C_1B_2$ is non-singular and $R = C_1B_1 + B_1^TC_1^T > 0$.
\\\\
If we apply a state feedback controller $u=Kx$ to this system, the corresponding closed-loop uncertain system is given by
\begin{align} \label{math: closed loop system}
  \dot{x} &= (A + B_2K)x + B_1w; \\
  z       &= C_1x;        \label{math: closed loop systemb}     \\
  w       &= \Delta(s)z.
\end{align}

A technique for constructing the required controller matrix $K$ is given in \cite{Mabrok2012a} and \cite{Mabrok2015}. It constructs the controller such that the closed-loop system (\ref{math: closed loop system}), (\ref{math: closed loop systemb}) has the negative imaginary property.
\\\\
Consider the following real Schur transformation of the matrix $A-B_2(C_1B_2)^{-1}C_1A$ which is applied to the system (\ref{math: closed loop system}), (\ref{math: closed loop systemb}):

\begin{align*}
  A_f &= U^T(A-B_2(C_1B_2)^{-1}C_1A)U   =
  \begin{bmatrix}
    \begin{tabular}{ l  r }
  $A_{11}$ & $A_{12}$ \\
  0 & $A_{22}$
\end{tabular}
\end{bmatrix},
\\
  B_f &= U^T(B_2(C_1B_2)^{-1} - B_1R^{-1}) =
  \begin{bmatrix}
    \begin{tabular}{c}
      $B_{f1}$ \\
      $B_{f1}$
\end{tabular}
\end{bmatrix},\\
  \tilde{B_1} &= U^TB_1 =
  \begin{bmatrix}
    \begin{tabular}{c}
      $B_{11}$ \\
      $B_{22}$
\end{tabular}
\end{bmatrix},
\end{align*}
such that all of the eigenvalues of the matrix $A_{11}$ are in the closed left half plane and $A_{22}$ is an anti-stable matrix.

\begin{theorem}\label{lemma: mabrok synthesis lemma} (\cite{Mabrok2012,Mabrok2015}) Consider the uncertain system (\ref{uncertain system})-(\ref{uncertain systemc}) with $C_1B_2$ invertible and $R = C_1B_1 + B_1^TC_1^T > 0$. Then there exists a controller K such that the closed-loop system (\ref{math: closed loop system}), (\ref{math: closed loop systemb}) is NI if there exist matrices $T \geq 0$ and $S \geq 0$ such that

\begin{align}
  -A_{22}T - TA_{22}^T + B_{f2}RB_{f2}^T &= 0, \label{math: mabrok lyap 1}\\
  -A_{22}S - SA_{22}^T + B_{22}R^{-1}B_{22}^T &= 0 \label{math: mabrok lyap 2}
\end{align}
and $S - T < 0$. Furthermore, the required controller gain matrix is given by

\begin{align*}
K &= (C_1B_2)^{-1}(B_1^TP - C_1A - R(B_2^TC_1^T)^{-1}B_2^TP),
\end{align*}
where $P = UP_fU^T$ and $P_f = \begin{bmatrix}
    \begin{tabular}{ l  r }
  0 & 0 \\
  0 & $(T-S)^{-1}$
\end{tabular}
\end{bmatrix} \geq 0$. Also, the matrix $P_f$ satisfies the algebraic Riccati equation
\begin{align} \label{math: AF PF riccati euqation}
P_fA_f + A_f^TP_f - P_fB_fRB_f^TP_f + P_f\tilde{B_1}R^{-1}\tilde{B_1^T}P_f = 0.
\end{align}
\end{theorem}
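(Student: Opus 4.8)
The plan is to establish the closed-loop negative imaginary property via the Riccati characterization of Lemma~\ref{theorem:NI using riccati}, applied to the closed-loop data \((A+B_2K,\,B_1,\,C_1,\,0)\). The feedthrough term is zero and therefore symmetric, and \(C_1B_1+B_1^TC_1^T=R>0\) by hypothesis, so Lemma~\ref{theorem:NI using riccati} reduces the NI test to producing a \(P\geq 0\) that solves the Riccati equation (\ref{math:riccati}) with \(A\) replaced by \(A_{\mathrm{cl}}:=A+B_2K\), \(B\) by \(B_1\) and \(C\) by \(C_1\). I would take the candidate \(P=UP_fU^T\), show that it solves this closed-loop equation, and separately confirm \(P_f\geq 0\).

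First I would exploit the structure of the gain. Substituting \(K=(C_1B_2)^{-1}(B_1^TP-C_1A-R(B_2^TC_1^T)^{-1}B_2^TP)\) gives \(C_1B_2K=NP-C_1A\) with \(N:=B_1^T-R(B_2^TC_1^T)^{-1}B_2^T\), so the gain is engineered to yield the clean identity \(C_1A_{\mathrm{cl}}=NP\). Writing \(\Phi:=I-B_2(C_1B_2)^{-1}C_1\) and \(M:=B_2(C_1B_2)^{-1}-B_1R^{-1}\), the algebraic identity \(N=-RM^T\) (a consequence of \(R=R^T\)) lets me collapse the closed-loop data: the \(C_1A\) contributions cancel exactly, leaving \(A_0^{\mathrm{cl}}=\Phi A-MRM^TP\) for the transformed plant matrix and \(Q^{\mathrm{cl}}=(C_1A_{\mathrm{cl}})^TR^{-1}(C_1A_{\mathrm{cl}})=PMRM^TP\) for the constant term.

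Next I would pass to the Schur coordinates. The defining relations of the decomposition are \(\Phi A=UA_fU^T\), \(M=UB_f\) and \(B_1=U\tilde B_1\), so with \(P=UP_fU^T\) every term of the closed-loop Riccati equation conjugates into the \(f\)-coordinates. The cross terms contribute \(-2P_fB_fRB_f^TP_f\) while \(Q^{\mathrm{cl}}\) contributes \(+P_fB_fRB_f^TP_f\); these merge into the single term \(-P_fB_fRB_f^TP_f\) which, together with \(P_f\tilde B_1R^{-1}\tilde B_1^TP_f\), reproduces exactly the algebraic Riccati equation (\ref{math: AF PF riccati euqation}). Thus the closed-loop NI Riccati equation is satisfied if and only if \(P_f\) solves (\ref{math: AF PF riccati euqation}).

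Finally I would verify that \(P_f=\mathrm{diag}(0,(T-S)^{-1})\) solves (\ref{math: AF PF riccati euqation}). Since the upper-left block of \(P_f\) vanishes and \(A_f\) is block upper triangular, every block of the equation is automatically zero except the \((2,2)\) block, which after multiplying on both sides by \(T-S\) becomes \(A_{22}(T-S)+(T-S)A_{22}^T-B_{f2}RB_{f2}^T+B_{22}R^{-1}B_{22}^T=0\); this is exactly the difference of the two Lyapunov equations (\ref{math: mabrok lyap 1}) and (\ref{math: mabrok lyap 2}). Positive semidefiniteness is immediate, since \(S-T<0\) forces \(T-S>0\) and hence \((T-S)^{-1}>0\), giving \(P=UP_fU^T\geq 0\). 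The main obstacle I anticipate is the bookkeeping of the middle steps: showing that the \(C_1A\) terms cancel to deliver the compact forms of \(A_0^{\mathrm{cl}}\) and \(Q^{\mathrm{cl}}\), and then tracking signs so that \(-2P_fB_fRB_f^TP_f\) and \(+P_fB_fRB_f^TP_f\) combine correctly. A secondary point to settle is the minimality hypothesis of Lemma~\ref{theorem:NI using riccati} for the closed-loop realization, which must be assumed or argued before the Riccati characterization can be invoked.
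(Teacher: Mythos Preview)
Your approach is correct and matches what the paper does. The paper itself does not prove this theorem inline---it simply cites \cite{Mabrok2012a,Mabrok2015}---but the identical argument (with $A$ replaced by $A+\epsilon I$) is spelled out later in the proof of Theorem~\ref{theorem: Ricc SNI synthesis}: subtract the two Lyapunov equations to get a Riccati equation for $(T-S)^{-1}$ in the $(2,2)$ block, lift to $P_f$, and then rewrite (\ref{math: AF PF riccati euqation}) as the closed-loop NI Riccati equation (\ref{math:riccati}) so that Lemma~\ref{theorem:NI using riccati} applies. Your write-up is in fact more explicit than the paper's about the algebra (the identities $C_1A_{\mathrm{cl}}=NP$, $N=-RM^T$, and the cancellation that produces $-P_fB_fRB_f^TP_f$), which the paper covers with the phrase ``after some algebraic manipulation.'' The minimality caveat you raise for invoking Lemma~\ref{theorem:NI using riccati} on the closed-loop realization is genuine and is not addressed in the paper's version either.
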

\begin{proof}
A proof of this lemma is given in \cite{Mabrok2012a} and \cite{Mabrok2015}.
\end{proof}

It is clear from Theorem~\ref{lemma: mabrok synthesis lemma} that the ARE (\ref{math: AF PF riccati euqation}) does not need to be solved directly. Rather than potentially dealing with the computational challenges presented by the singular Hamiltonian, the solution $P_f$ is obtained by solving the two Lypanov equations (\ref{math: mabrok lyap 1}), (\ref{math: mabrok lyap 2}). This gives Theorem~\ref{lemma: mabrok synthesis lemma} significant computational advantages over other synthesis techniques as the system order increases.
Despite the computational advantages associated with Theorem~\ref{lemma: mabrok synthesis lemma}, this approach has an inherent problem highlighted in the following corollary.

\begin{corollary} \label{corollary: closed-loop synthesises system has origin pole}
  The controller synthesized using Theorem~\ref{lemma: mabrok synthesis lemma} will always result in a closed-loop system (\ref{math: closed loop system}) that has a pole at the origin.
\end{corollary}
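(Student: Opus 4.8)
The goal is to show that the closed-loop state matrix $A+B_2K$ is singular, since a zero eigenvalue of $A+B_2K$ is precisely a pole of the closed-loop system (\ref{math: closed loop system}) at the origin. My plan is to pass to the real Schur coordinates furnished by $U$ and work with $\bar A_f := U^T(A+B_2K)U$, which is orthogonally similar to $A+B_2K$ and hence has the same spectrum. I would then show that $\bar A_f$ is block upper triangular with the \emph{same} leading block $A_{11}$ as $A_f$, and that $A_{11}$ is singular.

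First I would show that $A_f$ is singular. Setting $\Pi := B_2(C_1B_2)^{-1}C_1$, the invertibility of $C_1B_2$ gives $\Pi^2=\Pi$, so $\Pi$ is a projection with $\operatorname{rank}\Pi=\operatorname{rank}B_2\ge 1$ and $I-\Pi$ is singular. Hence $A-B_2(C_1B_2)^{-1}C_1A=(I-\Pi)A$ has rank at most $n-\operatorname{rank}B_2<n$ and is singular, and therefore so is the orthogonally similar matrix $A_f$. Because $A_f=\begin{bmatrix}A_{11}&A_{12}\\0&A_{22}\end{bmatrix}$ is block upper triangular with $A_{22}$ anti-stable (hence $\det A_{22}\ne 0$), the identity $\det A_f=\det A_{11}\det A_{22}=0$ forces $\det A_{11}=0$; that is, $A_{11}$ is singular.

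Next I would expand the closed loop. Substituting the synthesised gain $K$ and $P=UP_fU^T$ yields
\begin{align*}
A+B_2K=\big(A-B_2(C_1B_2)^{-1}C_1A\big)+B_2(C_1B_2)^{-1}\big(B_1^T-R(B_2^TC_1^T)^{-1}B_2^T\big)P,
\end{align*}
and conjugating by $U$ (using $PU=UP_f$) gives
\begin{align*}
\bar A_f=A_f+U^TB_2(C_1B_2)^{-1}\big(B_1^T-R(B_2^TC_1^T)^{-1}B_2^T\big)U\,P_f.
\end{align*}
Here every term beyond $A_f$ carries $P_f$ as a right factor. Since $P_f=\begin{bmatrix}0&0\\0&(T-S)^{-1}\end{bmatrix}$ has a zero first block-column, right multiplication by $P_f$ annihilates the first block-column of whatever it multiplies; thus the first block-column of $\bar A_f$ coincides with that of $A_f$, namely $\begin{bmatrix}A_{11}\\0\end{bmatrix}$. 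Hence $\bar A_f=\begin{bmatrix}A_{11}&\tilde A_{12}\\0&\tilde A_{22}\end{bmatrix}$ is block upper triangular, so $\det\bar A_f=\det A_{11}\,\det\tilde A_{22}=0$ since $A_{11}$ is singular by the first step. Therefore $\bar A_f$, and with it $A+B_2K$, is singular and has an eigenvalue at the origin.

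I expect the main obstacle to be the bookkeeping of the second step: one must verify that, after conjugation by $U$, \emph{every} contribution to $\bar A_f$ other than $A_f$ genuinely appears with $P_f$ on the right, so that the first block-column is preserved exactly, and that the zero eigenvalue coming from the singular block $A_{11}$ survives in the full spectrum — which the block-triangular form together with $\det A_{22}\ne 0$ guarantees. This conclusion is also the structural counterpart of the earlier observation that the NI and SNI Riccati equations share a singular Hamiltonian, which is ultimately why the synthesised closed loop is pinned to $s=0$.
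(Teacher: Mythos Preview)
Your proof is correct and follows essentially the same route as the paper: both pass to the Schur coordinates, argue that $A_{11}$ is singular, and then use the zero first block-column of $P_f$ to show that the closed-loop matrix inherits this zero eigenvalue. The only noteworthy difference is that your projection/rank argument for the singularity of $(I-\Pi)A$ avoids the paper's tacit assumption that $A$ is invertible (the paper exhibits the null vector $x=A^{-1}B_2\beta$), and you finish with a block-triangular determinant where the paper instead produces an explicit null vector $Uw$ with $w=[\tilde w^{T}\ 0]^{T}$, $A_{11}\tilde w=0$.
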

\begin{proof}
Let $\beta$ be chosen such that $x = A^{-1}B_2\beta \neq 0$.
Hence, $(A - B_2(C_1B_2)^{-1}C_1A)x = B_2\beta - B_2\beta = 0$. Therefore, the Schur decomposition used in Theorem~\ref{lemma: mabrok synthesis lemma} results in a matrix $A_f$ such that $A_{11}$ has all of its eigenvalues in the closed left half plane and is singular. Also, $A_{22}$ is an anti-stable matrix.
\\\\
The closed-loop plant matrix is given by,
\begin{align*}
 & A + B_2K, \\
 =& A + B_2(C_1B_2)^{-1}(B_1^TP - C_1A - R(B_2^TC_1^T)^{-1}B_2^TP), \\
 =& UA_fU^T + B_2(C_1B_2)^{-1}(B_1^T - R(B_2^TC_1^T)^{-1}B_2^T)P, \\
 =& UA_fU^T + \hat{B}UP_fU^T,
\end{align*}
where $\hat{B} = B_2(C_1B_2)^{-1}(B_1^T - R(B_2^TC_1^T)^{-1}B_2^T)$.
To show that the closed-loop system has a pole at the origin we will construct a non-zero vector $w$ such that $(A + B_2K)Uw = 0$.
\\\\
We suppose $w$ has the form $w = [\tilde{w}^T 0]^T$ where $A_{11}\tilde{w}=0$. Then $A_fw$ becomes
\begin{align*}
 A_fw = \begin{bmatrix}
    \begin{tabular}{ l  r }
  $A_{11}$ & $A_{12}$ \\
  0 & $A_{22}$
\end{tabular}
\end{bmatrix}
 \begin{bmatrix}
  \begin{tabular}{ c}
  $\tilde{w}$ \\
  0
\end{tabular}
\end{bmatrix} =
\begin{bmatrix}
  \begin{tabular}{ c}
  $A_{11}\tilde{w}$ \\
  0
\end{tabular}
\end{bmatrix} = 0.
\end{align*}
The vector $P_fw$ is then
\begin{align*}
  P_fw = \begin{bmatrix}
    \begin{tabular}{ l  r }
  0 & 0 \\
  0 & $(T-S)^{-1}$
\end{tabular}
\end{bmatrix}\begin{bmatrix}
  \begin{tabular}{ c}
  $\tilde{w}$ \\
  0
\end{tabular}
\end{bmatrix} = 0.
\end{align*}
Therefore $A_fw + \hat{B}P_fw = 0$. Hence, the closed-loop plant matrix $(A + B_2K)$ satisfies $(A + B_2K)Uw = U(A_f + \hat{B}P_f)w = 0$ and has an eigenvalue at the origin.
\end{proof}

Theorem~\ref{lemma: mabrok synthesis lemma} gives a sufficient condition for synthesizing a controller $K$ that results in a closed-loop system with the NI property that is marginally stable. In order to ensure that the closed-loop system is asymptotically stable, \cite{Mabrok2012a,Mabrok2015} propose to apply a perturbation to the plant matrix $A$ in order to shift the poles by $\epsilon > 0$ to the right in the complex plane. The new plant matrix $A_{\epsilon} = A + \epsilon I$ is used in place of $A$ when designing the state feedback controller. This means that when the controller is applied to the actual system, the closed-loop system will have all its poles shifted left in the complex plane by $\epsilon$. This approach ensures the closed-loop system is asymptotically stable.
\\\\
Perturbation of the plant matrix does produce an asymptotically stable closed-loop system. However, the preservation of the NI property after perturbation was not guaranteed in \cite{Mabrok2012,Mabrok2012a,Mabrok2015}. A worked example showing a single successful case is given in \cite{Mabrok2012} (see also \cite{Mabrok2012a,Mabrok2015}) but no general proof was given. This issue will be addressed as part of our main result in the following section.

\section{The Main Result} \label{sec: main_result}

The following section presents our main result. A result is given showing that perturbation of the plant matrix of a SISO NI transfer function matrix does not change the NI property of the system for all perturbations $\epsilon \geq 0$. This is then followed by a method for the synthesis of an asymptotically stable closed-loop NI system that offers the computational advantages of Theorem~\ref{lemma: mabrok synthesis lemma}, in addition to achieving a closed-loop with a prescribed degree of stability.

\begin{lemma}\label{thrm: NI iff NI over entire orthant}
A SISO proper real rational transfer function $G(s)$ is negative imaginary if and only if $G(s)$ is analytic in $Re[s]>0$ and the inequality
\begin{align} \label{ep_0 NI condition}
    j(G(\sigma) - G(\sigma)^*) \geq 0
\end{align}
is satisfied for all $\sigma = j\omega + \epsilon$ which is not a pole of $G(s)$ where $\omega \geq 0$, $\epsilon \geq 0$.
\end{lemma}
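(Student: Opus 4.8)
The plan is to exploit the SISO structure to turn the stated inequality into a sign condition on a harmonic function, and then argue each direction separately. For a scalar real-rational $G$ we have $G(\sigma)^\ast = \overline{G(\sigma)}$, so $j(G(\sigma) - G(\sigma)^\ast) = -2\,Im[G(\sigma)]$, and the orthant inequality (\ref{ep_0 NI condition}) is equivalent to $Im[G(\sigma)] \le 0$ for every $\sigma = \epsilon + j\omega$ with $\epsilon \ge 0$, $\omega \ge 0$ that is not a pole. Since $G$ is analytic off its poles and, under either hypothesis, has no poles in $Re[s]>0$, the function $u := Im[G]$ is harmonic on the open first quadrant $Q = \{\epsilon>0,\ \omega>0\}$, and the claim reduces to showing $u \le 0$ on $\overline{Q}$ if and only if $G$ is NI.

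For the forward direction I would assume $G$ is NI and verify $u \le 0$ on $\overline{Q}$ by the maximum principle. Analyticity in $Re[s]>0$ is Condition 1 of Definition~\ref{def:NI}. On $\partial Q$ the positive real axis gives $u=0$ (as $G$ has real coefficients it is real there), while the positive imaginary axis gives $u \le 0$ by Condition 2; properness forces $G(\sigma) \to D \in \mathbb{R}$ at infinity, so $u \to 0$. The delicate points are the imaginary-axis poles on $\partial Q$: using Conditions 3 and 4, a local Laurent expansion shows $u \to -\infty$ as $\sigma$ approaches such a pole from inside $Q$, the sign being fixed by the positive-semidefinite residue (resp.\ the nonnegative $\lim_{s\to0} s^2 G(s)$). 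Excising small semicircles about the poles and invoking a Phragm\'en--Lindel\"of form of the maximum principle for the unbounded domain then yields $u \le 0$ throughout $Q$, with the boundary cases following by continuity.

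For the converse I would assume analyticity in $Re[s]>0$ together with $u \le 0$ on $\overline{Q}$ and recover the four NI conditions. Condition 1 is immediate, and Condition 2 is exactly the restriction of the hypothesis to $\epsilon = 0$. Conditions 3 and 4 come from the same local analysis run in reverse: at a boundary pole $s_0 = j\omega_0$ write $G(s) \sim c_{-m}(s-s_0)^{-m}$ and set $s = s_0 + re^{i\theta}$, so that $u \sim |c_{-m}|\,r^{-m}\sin(\arg c_{-m} - m\theta)$. Requiring $u \le 0$ for all admissible $\theta$ as $r \to 0$ forces $m=1$ at $\omega_0 > 0$ (the admissible range $(-\pi/2,\pi/2)$ has length $\pi$) and pins $\arg c_{-1}$ so that $K = j c_{-1} \ge 0$, giving Condition 3. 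At the origin the admissible range shrinks to $[0,\pi/2]$, which instead permits $m \le 2$ and, for $m=2$, forces $c_{-2} = \lim_{s\to0} s^2 G(s) \ge 0$, giving Condition 4.

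I expect the main obstacle to be the forward maximum-principle step: $Q$ is unbounded and $u$ is singular at the boundary poles, so no elementary maximum principle applies directly. The real work will lie in the Phragm\'en--Lindel\"of argument, namely controlling $u$ at infinity through properness and excising the boundary poles by vanishing semicircular indentations whose contribution is nonpositive by virtue of the residue sign conditions, rather than in the algebra, which is routine once the reduction $j(G - G^\ast) = -2\,Im[G]$ and the harmonicity of $u$ are established.
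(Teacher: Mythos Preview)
Your plan is correct, but it proceeds very differently from the paper. The paper's proof consists of two sentences: it observes that a SISO transfer function is automatically symmetric and then invokes Lemma~3.1 of \cite{FERRANTE20132138}, which characterizes the NI property for symmetric real-rational transfer functions precisely by analyticity in $Re[s]>0$ together with the orthant inequality~(\ref{ep_0 NI condition}). All the analytic content is delegated to that reference.

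Your route is a self-contained complex-analysis argument: reduce $j(G-G^\ast)\ge0$ to the sign condition $Im[G]\le0$, note that $Im[G]$ is harmonic in the open quadrant, and then run a Phragm\'en--Lindel\"of maximum-principle argument in the forward direction (using properness for decay at infinity and the residue/leading-coefficient sign conditions of Definition~\ref{def:NI} to control indentations at boundary poles), with local Laurent analysis giving the converse. This is essentially a direct reproof of the SISO case of the cited Ferrante--Lanzon--Ntogramatzidis lemma rather than an appeal to it. The benefit of your approach is that it is self-contained and makes transparent \emph{why} the pole conditions in Definition~\ref{def:NI} (order and residue sign) are exactly what is needed for the orthant inequality, whereas the paper's approach is much shorter and avoids having to justify the Phragm\'en--Lindel\"of step. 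Two small points to tighten in your sketch: for a simple pole at the origin the sign of the residue is not stated in Condition~4 of Definition~\ref{def:NI} but is forced implicitly by Condition~2 as $\omega\to0^+$ (you should say this explicitly when arguing $u\to-\infty$ there); and the Phragm\'en--Lindel\"of step can be made elementary by applying the ordinary maximum principle on quarter-disks of radius $R$ with semicircular indentations and letting $R\to\infty$, since properness gives $u=O(1/R)$ on the outer arc.
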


\begin{proof}
Suppose the SISO transfer function $G(s)$ is proper, real, rational and NI. Hence, it satisfies the conditions of Definition~\ref{def:NI}. Therefore $G(s)$ is analytic in $Re[s] > 0$. Also since $G(s)$ is SISO, it is automatically symmetric. Then using Lemma 3.1 of \cite{FERRANTE20132138}, it follows that (\ref{ep_0 NI condition}) is satisfied.
\\\\
Conversely suppose $G(s)$ is analytic in $Re[s] > 0$ and (\ref{ep_0 NI condition}) is satisfied, Since $G(s)$ is SISO, proper and real rational, it satisfies the conditions of Lemma 3.1 of \cite{FERRANTE20132138} and hence the conditions of Definition~\ref{def:NI} are satisfied.
\end{proof}

\begin{remark}
A MIMO generalization of Lemma~\ref{thrm: NI iff NI over entire orthant} with a symmetry constraint can be obtained using the results in \cite{FERRANTE20132138,7317760}.
\\\\
Note that Corollary~\ref{corr: NI purturb still NI} does not generalize to the MIMO case unless symmetry of $G(s)$ is imposed.
\\\\
To show this fact, consider the following non-symmetric MIMO system $G(s) = C(sI-A)^{-1}B+D$ where,
\begin{align*}
  A &= \begin{bmatrix}
    \begin{tabular}{ l  r }
  $-1$ & $1$ \\
  0 & $-1$
\end{tabular}
\end{bmatrix}, &
  B &= \begin{bmatrix}
    \begin{tabular}{ l  r }
  $1$ & $-1$ \\
  0 & $1$
\end{tabular}
\end{bmatrix}, \\
  C &= \begin{bmatrix}
    \begin{tabular}{ l  r }
  $1$ & $0$ \\
  0 & $1$
\end{tabular}
\end{bmatrix}, &
D &= 0.
\end{align*}
\end{remark}

$G(s)$ is SNI via Lemma~\ref{lemma: SNI Riccati conditions}.
Suppose the plant matrix $G(s)$ is perturbed with a value of $\epsilon = 3$. In this case there is no longer a positive-definite solution to (\ref{math:riccati}) and therefore $G_{\epsilon}(s)$ is not NI. Thus, we have shown that Corollary~\ref{corr: NI purturb still NI} does not generalize to non-symmetric MIMO systems.
\\\\
The following corollary relates the previous lemma to perturbations in the plant matrix of an NI system.

\begin{corollary}\label{corr: NI purturb still NI}
If a SISO proper, real, rational transfer function matrix $G(s)$ with minimal state space realization $
\begin{bmatrix}
    \begin{tabular}{ l | r }
  A & B \\ \hline
  C & D
\end{tabular}
\end{bmatrix}
$ is NI, then the perturbed transfer function $G_{\epsilon}(s)$ with state space realization $
\begin{bmatrix}
    \begin{tabular}{ l | r }
  $A-\epsilon$ I & B \\ \hline
  C & D
\end{tabular}
\end{bmatrix}
$ will be NI for all $\epsilon \geq 0$.
\end{corollary}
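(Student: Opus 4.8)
The plan is to reduce the claim to a pure frequency-domain statement and then apply the orthant characterisation of Lemma~\ref{thrm: NI iff NI over entire orthant}. The key observation is that perturbing the plant matrix by $-\epsilon I$ is exactly a shift of the transfer-function argument, so the hard-won orthant condition of the lemma is precisely what we need.

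First I would write the perturbed transfer function explicitly. Since $sI - (A-\epsilon I) = (s+\epsilon)I - A$, it follows that
\begin{align*}
G_\epsilon(s) = C\big((s+\epsilon)I - A\big)^{-1}B + D = G(s+\epsilon).
\end{align*}
Because $G(s)$ is SISO, proper, real and rational, so is $G(s+\epsilon)$, hence the standing hypotheses of Lemma~\ref{thrm: NI iff NI over entire orthant} carry over to $G_\epsilon$ immediately.

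Next I would verify the two conditions of the converse direction of Lemma~\ref{thrm: NI iff NI over entire orthant} for $G_\epsilon$. For analyticity, note that the poles of $G_\epsilon$ are the poles of $G$ translated left by $\epsilon$; since $G$ is NI it has no poles in $Re[s]>0$, and translating left by $\epsilon \geq 0$ keeps every pole out of $Re[s]>0$, so $G_\epsilon$ is analytic in $Re[s]>0$. For the orthant inequality, I would parametrise a generic point of the closed orthant as $\sigma = j\omega + \eta$ with $\omega \geq 0$, $\eta \geq 0$, and use the identity above to write $G_\epsilon(\sigma) = G(j\omega + (\eta+\epsilon))$. Setting $\eta' = \eta + \epsilon \geq 0$, the point $j\omega + \eta'$ again lies in the same orthant, so the forward direction of Lemma~\ref{thrm: NI iff NI over entire orthant} applied to the NI function $G$ gives $j\big(G(j\omega+\eta') - G(j\omega+\eta')^*\big) \geq 0$, which is precisely $j\big(G_\epsilon(\sigma) - G_\epsilon(\sigma)^*\big) \geq 0$. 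The converse direction of Lemma~\ref{thrm: NI iff NI over entire orthant} then yields that $G_\epsilon$ is NI.

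The crux — and the only place any real content enters — is the closure property that the shift $s \mapsto s+\epsilon$ maps the closed orthant $\{\,j\omega + \eta : \omega \geq 0,\ \eta \geq 0\,\}$ into itself. This is exactly why Lemma~\ref{thrm: NI iff NI over entire orthant} is formulated over the whole orthant rather than only the imaginary axis: the ordinary imaginary-axis NI test is not closed under the perturbation, whereas the orthant test is. I do not expect genuine obstacles beyond bookkeeping; the main things to be careful about are keeping the two roles of the symbol $\epsilon$ distinct (the fixed perturbation versus the orthant offset $\eta$) and noting that minimality of the perturbed realisation is never invoked, since the entire argument is carried out in the frequency domain.
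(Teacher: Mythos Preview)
Your argument is correct and follows the same route as the paper's own proof: identify $G_\epsilon(s)=G(s+\epsilon)$ via the resolvent shift, then invoke Lemma~\ref{thrm: NI iff NI over entire orthant}. You are in fact more careful than the paper, explicitly verifying analyticity and the full orthant inequality for $G_\epsilon$ before applying the converse direction of the lemma, whereas the paper only checks $G_\epsilon(j\omega)=G(j\omega+\epsilon)$ and appeals to the lemma directly.
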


\begin{proof}
The state space model of the perturbed system is given by
\begin{align}
\dot{x} &= (A - \epsilon I)x + Bu \\
y &= Cx + Du.
\end{align}
Hence
\begin{align}
G_\epsilon(j\omega) = C(j\omega I - A + \epsilon I)^{-1}B + D = G(j\omega +\epsilon).
\end{align}
This is exactly the $G(\sigma)$ from Lemma~\ref{thrm: NI iff NI over entire orthant}. Therefore, if $G(s)$ is NI, then $G_\epsilon(s) = G(s+\epsilon)$ is NI for all $\epsilon \geq 0$. \\
\end{proof}

\begin{remark}
The set of SNI transfer functions is not an open set. \\
\end{remark}

To establish this fact, consider the following SISO transfer function
\begin{align} \label{neg epsilon example}
  G(s) = \frac{s+1}{(s+2)(s+2)},
\end{align}
which has imaginary component
\begin{align*}
  Im[G(j\omega)] &= \frac{-\omega^3}{16\omega^2 + (4-\omega^2)^2} < 0 \quad \forall \omega > 0.
\end{align*}
This is clearly SNI. Now consider the perturbed transfer function $G_{\epsilon}(s) = G(s+\epsilon)$ with imaginary component
\begin{align*}
  Im[G_{\epsilon}(j\omega)] &= \frac{-2\epsilon - \epsilon^2 -\omega^2}{((2+\epsilon)^2 -\omega)^2 + 4\omega^2(2+\epsilon))^2}.
\end{align*}
The imaginary component of $G_{\epsilon}(s)$ is positive when $-2\epsilon - \epsilon^2 -\omega^2 > 0$ holds.
Thus, $G_{\epsilon}(s)$ does not have the NI property for any $\epsilon$ such that $-2 < \epsilon < 0$ holds. Therefore, the set of SNI transfer functions is not an open set (unlike the set of strictly bounded real transfer functions).
\\\\
Corollary~\ref{corr: NI purturb still NI} is now used to extend Theorem~\ref{lemma: mabrok synthesis lemma} to a result that guarantees preservation of the NI property for an asymptotically stable closed-loop system with a prescribed degree of stability.

\subsection{Schur Decomposition}
Let the constant $\epsilon>0$ defining the required stability margin be given. We begin by using a Schur decomposition of the matrix $A+\epsilon I -B_2(C_1B_2)^{-1}C_1(A+\epsilon I)$ as follows:

\begin{align*}
  A_f &= U^T(A+\epsilon I -B_2(C_1B_2)^{-1}C_1A  \\
  &-\epsilon B_2(C_1B_2)^{-1}C_1 )U =
  \begin{bmatrix}
    \begin{tabular}{ l  r }
  $A_{11}$ & $A_{12}$ \\
  0 & $A_{22}$
\end{tabular}
\end{bmatrix},
\\
  B_f &= U^T(B_2(C_1B_2)^{-1} - B_1R^{-1}) =
  \begin{bmatrix}
    \begin{tabular}{c}
      $B_{f1}$ \\
      $B_{f1}$
\end{tabular}
\end{bmatrix},\\
  \tilde{B_1} &= U^TB_1 =
  \begin{bmatrix}
    \begin{tabular}{c}
      $B_{11}$ \\
      $B_{22}$
\end{tabular}
\end{bmatrix},
\end{align*}
where $U$ is an orthogonal matrix obtained through the real Schur transformation; e.g see Section 5.4 of \cite{Bernstein2009}. As in \cite{Mabrok2012}, this decomposition allows the computational difficulties associated with singular Hamiltonians to be avoided.

\begin{theorem}\label{theorem: Ricc SNI synthesis}
Consider the LTI system (1) with a SISO, rational transfer function matrix $G(s)$ that has a minimal state space realization $
\begin{bmatrix}
    \begin{tabular}{ l | r }
  A & B \\ \hline
  C & D
\end{tabular}
\end{bmatrix}
$. For a given $\epsilon > 0$, there exists a static state-feedback matrix $K$ such that the closed-loop system (\ref{math: closed loop system}) is NI with degree of stability $\epsilon$ if there exist matrices $T \geq 0$ and $S \geq 0$ such that
\begin{align}
  -A_{22}T - TA_{22}^T + B_{f2}RB_{f2}^T &= 0, \label{math: T cond}\\
  -A_{22}S - SA_{22}^T + B_{22}R^{-1}B_{22}^T &= 0, \label{math: S cond}\\
  T-S &> 0,
\end{align}
where the matrices $A_{22}$,$B_{f2}$ and $B_{22}$ are obtained from the Schur decomposition given above.
\\\\
Furthermore, a corresponding state feedback controller matrix $K$ is given by,
\begin{align*}
K &= (C_1B_2)^{-1}(B_1^TP - C_1A - \epsilon C_1 - R(B_2^TC_1^T)^{-1}B_2^TP),
\end{align*}
where $P = UP_fU^T$ and $P_f = \begin{bmatrix}
    \begin{tabular}{ l  r }
  0 & 0 \\
  0 & $(T-S)^{-1}$
\end{tabular}
\end{bmatrix} \geq 0$.
Also, $P_f$ satisfies the algebraic Riccati equation
\begin{align*}
P_fA_f + A_f^TP_f - P_fB_fRB_f^TP_f + P_f\tilde{B_1}R^{-1}\tilde{B_1^T}P_f = 0.
\end{align*}
\end{theorem}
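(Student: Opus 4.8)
The plan is to obtain this result by applying the marginally-stable synthesis of Theorem~\ref{lemma: mabrok synthesis lemma} to the \emph{forward-shifted} plant matrix $A_\epsilon := A + \epsilon I$ and then invoking the perturbation result of Corollary~\ref{corr: NI purturb still NI} to carry the negative imaginary property back to the true plant. The first step is to recognise that the Schur decomposition displayed immediately above the theorem is precisely the decomposition of $A_\epsilon - B_2(C_1B_2)^{-1}C_1 A_\epsilon$, since $A+\epsilon I - B_2(C_1B_2)^{-1}C_1 A - \epsilon B_2(C_1B_2)^{-1}C_1 = A_\epsilon - B_2(C_1B_2)^{-1}C_1 A_\epsilon$. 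Consequently the matrices $A_{22}$, $B_{f2}$, $B_{22}$ and the Lyapunov equations (\ref{math: T cond}), (\ref{math: S cond}) coincide with the objects that Theorem~\ref{lemma: mabrok synthesis lemma} would generate for the system $(A_\epsilon, B_1, B_2, C_1)$; note that $B_f$ and $\tilde{B}_1$ do not depend on the plant matrix, so no modification is needed there.

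Next I would apply Theorem~\ref{lemma: mabrok synthesis lemma} to $(A_\epsilon, B_1, B_2, C_1)$. Under the hypotheses $T \geq 0$, $S \geq 0$ and $T - S > 0$, that theorem supplies $P = UP_fU^T \geq 0$ and the gain $\hat{K} = (C_1B_2)^{-1}(B_1^TP - C_1 A_\epsilon - R(B_2^TC_1^T)^{-1}B_2^TP)$ such that the design closed loop $\hat{A} := A_\epsilon + B_2\hat{K}$ yields an NI transfer function $\hat{G}(s) = C_1(sI - \hat{A})^{-1}B_1$. Expanding $C_1 A_\epsilon = C_1 A + \epsilon C_1$ shows that $\hat{K}$ is exactly the gain $K$ stated in the theorem and that it satisfies the displayed Riccati equation. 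The crucial algebraic identity is then $A + B_2 K = (A_\epsilon + B_2 K) - \epsilon I = \hat{A} - \epsilon I$: applying the \emph{same} gain to the true plant produces a closed-loop plant matrix equal to the NI design matrix shifted back by $-\epsilon I$. Hence the true closed-loop transfer function in (\ref{math: closed loop system}) equals $C_1(sI - \hat{A} + \epsilon I)^{-1}B_1 = \hat{G}(s + \epsilon)$.

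To finish, I would invoke Corollary~\ref{corr: NI purturb still NI}: since $\hat{G}(s)$ is SISO, proper, real rational and NI, its perturbation $\hat{G}(s+\epsilon)$ is NI for every $\epsilon \geq 0$, so the true closed-loop system is NI. For the degree of stability, the NI property of $\hat{G}$ forces every eigenvalue of $\hat{A}$ into the closed left half plane (indeed Corollary~\ref{corollary: closed-loop synthesises system has origin pole} places one at the origin); shifting by $-\epsilon I$ moves every closed-loop eigenvalue of $\hat{A} - \epsilon I = A + B_2K$ to have real part at most $-\epsilon$, which is exactly a prescribed degree of stability $\epsilon$, and in particular asymptotic stability as the origin pole migrates to $-\epsilon$.

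The main obstacle I anticipate is not any single hard calculation but the careful bookkeeping that licenses the use of Corollary~\ref{corr: NI purturb still NI}: one must confirm that the transfer function $\hat{G}$ produced by the shifted design satisfies the SISO minimal-realization hypotheses of that corollary, or else run the perturbation argument purely at the transfer-function level via Lemma~\ref{thrm: NI iff NI over entire orthant}, which only requires $\hat{G}$ to be proper, real rational and NI. One must also verify that shifting the plant matrix by $-\epsilon I$ corresponds precisely to the argument shift $s \mapsto s + \epsilon$ required there. Establishing the clean identity $A + B_2K = \hat{A} - \epsilon I$ and tracking that all closed-loop poles lie strictly to the left of $-\epsilon$ are the two points that must be handled with care.
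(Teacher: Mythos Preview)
Your proposal is correct and follows essentially the same route as the paper: establish that the $\epsilon$-shifted closed loop $A_\epsilon + B_2K$ is NI, then invoke the perturbation result (Lemma~\ref{thrm: NI iff NI over entire orthant}/Corollary~\ref{corr: NI purturb still NI}) together with Corollary~\ref{corollary: closed-loop synthesises system has origin pole} to obtain the NI property and the degree of stability for the true closed loop $A + B_2K = (A_\epsilon + B_2K) - \epsilon I$. The only cosmetic difference is that the paper re-derives the Lyapunov-to-Riccati manipulation explicitly (subtracting (\ref{math: T cond}) from (\ref{math: S cond}), inverting $X = S - T$, and then rewriting the $P_f$ equation as the NI Riccati equation (\ref{math: final NI synth equation}) for $A_{cl} = A + \epsilon I + B_2K$ so that Lemma~\ref{theorem:NI using riccati} applies), whereas you wrap all of that into a single invocation of Theorem~\ref{lemma: mabrok synthesis lemma} on the shifted data $(A_\epsilon, B_1, B_2, C_1)$; the paper also finishes with Lemma~\ref{thrm: NI iff NI over entire orthant} directly rather than Corollary~\ref{corr: NI purturb still NI}, which sidesteps the minimality bookkeeping you flagged.
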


\begin{proof} Let $\epsilon > 0$ be given and suppose there exist matrices $T,S \geq 0$ satisfying equations (\ref{math: T cond}), (\ref{math: S cond}) such that $T>S$.
Subtracting (\ref{math: T cond}) from (\ref{math: S cond}) gives
\begin{align}
  A_{22}X + XA_{22}^T - B_{22}R^{-1}B_{22}^T + B_{f2}RB_{f2}^T = 0, \label{math: X equation}
\end{align}
where $X = S - T < 0$.
\\\\
Let $P_1 = -X^{-1} > 0$ and pre and post multiply (\ref{math: X equation}) by $X^{-1}$ to get the following Riccati equation:
\begin{multline}
  P_1A_{22} + A_{22}^TP_1 - P_1B_{22}R^{-1}B_{22}^TP_1 + \\ P_1B_{f2}RB_{f2}^TP_1 = 0.
\end{multline}
It follows that
\begin{align}
  P_fA_{f} + A_{f}^TP_f - P_f\tilde{B_{1}}R^{-1}\tilde{B_{1}}^TP_f + P_fB_{f}RB{f}^TP_f = 0, \label{math: X equation2}
\end{align}
has a solution $P_f = \begin{bmatrix}
    \begin{tabular}{ l  r }
  0 & 0 \\
  0 & $P_1$
\end{tabular}
\end{bmatrix} \geq 0$, where $A_f$,$B_f$ and $\tilde{B_{1}}$ are defined as above.
\\\\
After some algebraic manipulation, (\ref{math: X equation2}) can be written as
\begin{align}
  P\tilde{A} + \tilde{A}^TP + PB_1R^{-1}B_1^TP + Q = 0, \label{math: final NI synth equation}
\end{align}
where
\begin{align*}
  \tilde{A} &= A+\epsilon I - B_1R^{-1}C_1(A+\epsilon I) + (I - B_1R^{-1}C_1)B_2K, \\
            &= A_{cl} - B_1R^{-1}C_1A_{cl}, \\
  R & = C_1B_1 + B_1^TC_1^T, \\
  Q &= (A+\epsilon I + B_2K)^TC_1^TR^{-1}C_1(A+\epsilon I + B_2K), \\
    &= A_{cl}^TC_1^TR^{-1}C_1A_{cl}.
\end{align*}
Here $A_{cl} = A+\epsilon I + B_2K$ is the perturbed plant matrix of the closed-loop system (\ref{math: closed loop system}). Therefore, since $P = U\begin{bmatrix}
    \begin{tabular}{ l  r }
  0 & 0 \\
  0 & $P_1$
\end{tabular}
\end{bmatrix}U^T \geq 0$ is a solution to (\ref{math: final NI synth equation}), then it follows from Lemma~\ref{theorem:NI using riccati} that the perturbed closed-loop system is NI. Also, Corollary~\ref{corollary: closed-loop synthesises system has origin pole} implies that the perturbed closed-loop transfer function will have a pole at the origin. However, the actual closed-loop system will have have its poles shifted by an amount $\epsilon$ to the left in the complex plane resulting in the desired asymptotically stable system with degree of stability $\epsilon$. Further to this, it follows from Lemma~\ref{thrm: NI iff NI over entire orthant} that the actual closed-loop system (\ref{math: closed loop system}) is NI.
\end{proof}

\begin{remark}
  The closed-loop system using the state feedback controller $K$ synthesis in Theorem~\ref{theorem: Ricc SNI synthesis} will have a pole located at $-\epsilon$. All of the remaining closed-loop poles will be to the left of this pole.
\end{remark}

The theorem presented above can be used to synthesize a controller that results in an asymptotically stable closed-loop system with a prescribed degree of stability. Further to this, Theorem~\ref{theorem: Ricc SNI synthesis} extends Lemma~\ref{lemma: mabrok synthesis lemma} by guaranteeing the closed-loop system also has the NI property. For higher order systems, this synthesis approach also offers a computational advantage over alternative LMI based techniques in that the solution can be obtained from a Schur decomposition and two Lyapunov equations.

\section{Illustrative Example}

The following section illustrates how Theorem~\ref{theorem: Ricc SNI synthesis} may be applied. Consider the following uncertain system of the form (4), (5), (6) considered in \cite{Mabrok2012}:

\begin{align*}
  A = \begin{bmatrix}
      \begin{tabular}{ l c r }
      -1 & 0 & -1 \\
       1 & 1 & -1 \\
       -5 & 1 & 1
    \end{tabular}
  \end{bmatrix},
  B_1 = \begin{bmatrix}
      \begin{tabular}{ c }
       -1   \\
       1   \\
       0
    \end{tabular}
  \end{bmatrix},  \\
  B_2 = \begin{bmatrix}
      \begin{tabular}{ c }
       0   \\
       4   \\
       2
    \end{tabular}
  \end{bmatrix},
  C_1 = \begin{bmatrix}
      \begin{tabular}{ l c r }
       0 & 2 & -3
    \end{tabular}
  \end{bmatrix}.
\end{align*}

The example given in \cite{Mabrok2012a} uses the synthesis technique outlined in Lemma~\ref{lemma: mabrok synthesis lemma} and suggests a perturbation of $\epsilon = 0.3$ in order to move any poles away from the origin. It follows from Corollary~\ref{corr: NI purturb still NI} and Theorem~\ref{thrm: NI iff NI over entire orthant} that any $\epsilon \geq 0$ will result in a NI closed-loop system provided that the condition $T>S$ is satisfied.
\\\\
Applying the Schur decomposition to the matrix $A+\epsilon I-B_2(C_1B_2)^{-1}C_1A-\epsilon B_2(C_1B_2)^{-1}C_1$ for any value of $\epsilon > 0$ results in a matrix $A_f$ with the following form
\begin{align*}
  A_f = U^T \bigg(\begin{bmatrix}
      \begin{tabular}{ l c r }
      -1 & 0 & -1 \\
      -33 & 6 & 9 \\
      -22 & 4 & 6
    \end{tabular}
  \end{bmatrix} + \epsilon \begin{bmatrix}
      \begin{tabular}{ l c r }
      1 & 0 & 0 \\
      0 & -3 & 6 \\
      0 & -2 & 4
    \end{tabular}
  \end{bmatrix}\bigg)U,
\end{align*}
where $U$ is the real Schur transformation matrix and is dependent on the value chosen for $\epsilon$.
\\\\
We will now choose $\epsilon = 2$.
\\\\
The solution to the Lyapunov equations (\ref{math: T cond}), (\ref{math: S cond}) gives $T = 0.039$ and $S = 0.019$ which implies that
\begin{align*}
X = -0.020.
\end{align*}

It follows that
\begin{align}
  P_f = \begin{bmatrix}
      \begin{tabular}{ l c r }
      0 & 0 & 0 \\
     0 & 0 & 0 \\
     0 & 0 & $49.078$
    \end{tabular}
  \end{bmatrix} \geq 0.
\end{align}
Therefore, $P = UP_fU^T \geq 0$ is a solution to (\ref{math: final NI synth equation}) and the controller gain matrix is given by
\begin{align}
  K = \begin{bmatrix}
      \begin{tabular}{ l c r }
      34.008 & -15.984 & 0.680
    \end{tabular}
  \end{bmatrix}.
\end{align}

The closed-loop system formed using this state feedback controller is NI with real poles located at -2.0, -2.5 and -66.1 in the complex plane. However, it follows from Theorem~\ref{theorem: Ricc SNI synthesis} that the closed loop system is NI for all values of $\epsilon \geq 0$ and the closed-loop system is asymptotically stable with a pole at $-\epsilon$ and all of the other closed-loop poles to the left of this pole.
\\\\
The preservation of the NI property for $\epsilon \geq 0$ can be seen for this system in Figure~\ref{fig_worked_example_bode} which shows the Bode diagram of the perturbed closed-loop transfer function for a perturbation value of $\epsilon = 2$.

\begin{figure}[h!]
\centering
\includegraphics[width=3.3in]{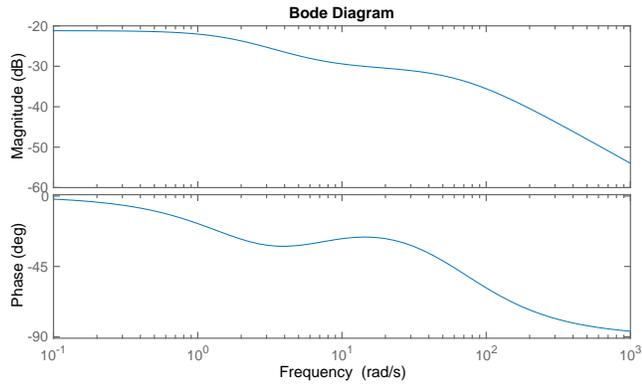}
\caption{Bode diagram of the closed-loop transfer function for a perturbation values of $\epsilon = 2$. Note that the closed loop system is actually SNI.}
\label{fig_worked_example_bode}
\end{figure}

\section{CONCLUSIONS}

This paper has shown that a SISO system that satisfies the negative imaginary property will maintain the negative imaginary property for all positive perturbations applied to the plant matrix. This result was used to develop a new method for negative imaginary controller synthesis that extends existing techniques by guaranteeing a closed-loop system that is asymptotically stable with a prescribed degree of stability and the NI property. The synthesis approach used, relies on the solution to two Lyapunov equations and as such offers computational advantages for higher order systems when compared with alternative LMI synthesis approaches.


\bibliographystyle{IEEEtran}
\bibliography{root}


\end{document}